\documentclass{amsart}
\usepackage{amsmath,amssymb,latexsym, amsfonts, amscd, amsthm}

\usepackage{hyperref}
\hypersetup{colorlinks=false}
 \usepackage[foot]{amsaddr}

\usepackage{verbatim}
\usepackage{xcolor}
\hypersetup{colorlinks=false,linkbordercolor=red,linkcolor=green,pdfborderstyle={/S/U/W 1}}

\oddsidemargin=0.3 in 
\evensidemargin=0.3 in
\textwidth=6.5 in

\hoffset -0.3 in  
\headheight=12pt
\headsep=25pt 
\topmargin=0 in 
\textheight=8.5 in

\usepackage{bbm}
\usepackage{enumerate}
\usepackage[all]{xy}

\newtheorem{thm}{Theorem}[section]

\newtheorem{cor}[thm]{Corollary}

\theoremstyle{definition}

\theoremstyle{remark}
\newtheorem{rem}[thm]{Remark}

\numberwithin{equation}{section}

\DeclareMathOperator*{\End}{End}
\DeclareMathOperator*{\Irr}{Irr}
\DeclareMathOperator*{\disc}{disc}
\DeclareMathOperator*{\Gal}{Gal}
\DeclareMathOperator*{\Res}{Res}

\DeclareMathOperator*{\ad}{ad}

\DeclareMathOperator*{\der}{der}
\DeclareMathOperator*{\Hom}{Hom}
\DeclareMathOperator*{\J}{JH}

\DeclareMathOperator*{\SU}{SU}
\DeclareMathOperator*{\GU}{GU}
\DeclareMathOperator*{\U}{U}
\DeclareMathOperator*{\SL}{SL}
\DeclareMathOperator*{\GL}{GL}
\DeclareMathOperator*{\GSp}{GSp}
\DeclareMathOperator*{\Spin}{Spin}
\DeclareMathOperator*{\GSpin}{GSpin}

\newcommand{\s}{\simeq}

\newcommand{\CC}{\mathbb{C}}
\newcommand{\QQ}{\mathbb{Q}}
\newcommand{\ZZ}{\mathbb{Z}}

\def\bA{\bold A}
\def\bG{\bold G}

\def\bP{\bold P}
\def\bM{\bold M}
\def\bN{\bold N}
\def\bB{\bold B}
\def\bT{\bold T}

\begin{document}

\title[Weakly unramified representations and $R$-groups]{Weakly unramified representations, finite morphisms, and Knapp-Stein $R$-groups}

\author[Kwangho Choiy]{Kwangho Choiy} 
\address{School of Mathematical and Statistical Sciences,
Southern Illinois University,
Carbondale, IL 62901-4408,
U.S.A.}
\email{kchoiy@siu.edu}

\keywords{weakly unramified representations, Kottwitz homomorphism, non-quasi-split algebraic group, Knapp-Stein $R$-group, restriction}

\subjclass[2010]{Primary \textbf{11F70}; Secondary 11E95, 20G20, 22E50}

\begin{abstract}
We transfer Knapp-Stein $R$-groups for unitary weakly unramified characters between a $p$-adic quasi-split group and its non-quasi-split inner forms, and provide the structure of those $R$-groups for a general connected reductive group over a $p$-adic field. This work supports previous studies on the behavior of $R$-groups between inner forms, and extends Keys' classification for unitary unramified cases of simply-connected, almost simple, semi-simple groups.
\end{abstract}

\maketitle
\section{Introduction} \label{intro}

Unramified representations of $p$-adic groups are one of essential classes in representation theory \cite{bdk, bk93, bk98, keys82unram} and further have geometric applications \cite{haines14, hr21, hai10, shin12}.
Focused on their representation-theoretic aspects, we study the reducibility of principal series representations of $p$-adic groups induced from unitary weakly unramified characters, whose category is larger than that of unitary unramified characters.

The purpose of this paper is to transfer Knapp-Stein $R$-groups for unitary weakly unramified characters between a $p$-adic quasi-split group and its non-quasi-split inner forms; and to provide the general structure of those $R$-groups for principal series representations of an arbitrary reductive algebraic group induced from unitary weakly unramified characters.
The study of the transfer between inner forms has been conducted in several cases, including $\SL_n$ and its inner forms \cite{chaoli, chgo12} and classical groups \cite{cgclassic, cgsu, han04}. Our result on those $R$-groups extends Keys' classification \cite{keys82unram} of the $R$-groups for unitary unramified characters of minimal Levi subgroups of simply-connected groups over a $p$-adic field to unitary weakly unramified cases of general connected reductive groups whose derived groups are simply-connected, almost-simple.
In stages, we generalize the $R$-groups for unitary unramified characters from the simply-connected groups to such arbitrary reductive algebraic groups, and extend the $R$-group for unitary unramified characters to unitary weakly unramified cases.
We also describe the difference between those $R$-groups in each direction. 

To be precise, throughout the introduction, we let $\bG$ be a connected reductive algebraic group over a $p$-adic field $F$ of characteristic 0 and let $\bM$ be a minimal $F$-Levi subgroup of $\bG.$ Denote by $\bar{F}$ an algebraic closure of $F$ and by $\Gamma$ the absolute Galois group $\Gal(\bar{F} / F).$ We write $G, M$ for the group $\bG(F), \bM(F)$ of their $F$-rational points, respectively, and this notation shall be applied to other algebraic groups.

We consider the group $X^{\rm{w}} (M)$ of weakly unramified characters of $M.$ It is noted that the connected component of $X^{\rm{w}} (M)$ is the subgroup of unramified characters of $M$ and two groups are identical in certain settings, but not in general (see Section \ref{subsub wurep}).  
For the nature of $R$-groups, we specialize in discrete series representations of $F$-Levi subgroups so that those characters in our question are unitary. Given a unitary character $\chi$ in $X^{\rm{w}} (M),$ the group structure of the Knapp-Stein $R$-group $R_\chi$ is of great interest. In particular, the group $R_\chi$ governs the $\CC$-algebra structure of the commuting algebra ${\End}_{G}(i_M^G (\chi))$ of $G$-endomorphisms of the parabolic induction $i_M^G (\chi)$ and then its reducibility (see Section \ref{notation} and Remark \ref{rem for commuting alg}). Also, the $R$-group  determines the number of conjugacy classes of good maximal compact (special maximal parahoric) subgroups of $G$ (see \cite[Section 3]{keys82unram} and Remark \ref{rem for K}). 

Revisiting Keys' classification of $R_\chi$ in \cite{keys82unram} for a simply-connected, almost simple, semi-simple group $\bG$ and a unitary unramified character $\chi$ of $M$ (see Section \ref{unram rgp-keys}), we first extend his classification to non-simply-connected cases. Our generalization is done via the setting of a connected reductive group $\bG$ such that its derived subgroup $\bG_{\der}$ is simply-connected and almost simple. To this end, we move to the intersection $\bM \cap \bG_{\der},$ denoted by $\bM_\flat,$ which is a minimal $F$-Levi subgroup of $\bG_{\der}.$ 
Given a unitary unramified character $\chi$ of $M,$ we obtain a unitary unramified character $\chi_{\flat}$ of $M_{\flat}$ which is an irreducible constituent in the restriction  ${\Res}_{M_\flat}^{M}(\chi)$ of $\chi$ from $M$ to $M_\flat.$ It then follows that 
\[
\chi_\flat = {\Res}_{M_\flat}^{M}(\chi)
\] 
due to \cite[Lemma 2.1(d)]{gk82} and \cite[Proposition 2.4]{tad92} (see Section \ref{gen to non-sc} for further details).
We employ a finite abelian group $\widehat{W(\chi)}$ consisting of characters $\eta$ on $M/M_\flat$ such that ${^w}\chi \s \chi \eta$ for some Weyl group element $w$ stabilizing $\chi_\flat$ (see \eqref{hatW}). We then have the following exact sequence (Theorem \ref{thm for general}):
\begin{equation} \label{intro exseq 1}
	1 \longrightarrow R_{\chi}  \longrightarrow R_{\chi_\flat} \longrightarrow \widehat{W(\chi)} 
	\longrightarrow 1.
\end{equation}
It turns out that this extends the Keys' work for unitary unramified cases from simply-connected groups to such general groups.

Now, we transfer the Knapp-Stein $R$-groups for unitary weakly unramified characters between inner forms. For this, we employ a quasi-split inner form of $\bG.$ Namely, we consider a connected reductive $F$-quasi-split group $\bG^*$ such that $(\bG, \Psi)$ is an $F$-inner form of $\bG^*$ with a $\Gamma$-stable $\bG^*_{\ad}(\bar F)$-orbit $\Psi$ of $\bar F$-isomorphisms $\psi: \bG \rightarrow \bG^*.$ Here, $\bar F$ is an algebraic closure of $F;$  $\Gamma$ is the absolute Galois group over $F;$ and $\bG^*_{\ad}$ denotes the adjoint group of $\bG^*.$
Following \cite[Section 11]{haines14} and \cite[Section 8]{haines15}, we obtain an $F$-Levi subgroup $\bM^*$ of $\bG^*$ such that $\psi_0(\bM)=\bM^*$ for some $\psi_0 \in \Psi.$ 
Denote by $\bA$ a maximal $F$-split torus in $\bG$ such that $\bM$ is the centralizer of $\bA$ in ${\bG}.$ 
Then, we have a maximal $F$-split torus $\bA^*$ and maximal $F$-torus $\bT^*$ of $\bG^*$ such that $\psi_0(\bA) \subset \bA^*,$ $\bT^* \subset \bM^*,$ and $\bT^*$ is the centralizer of $\bA^*$ in ${\bG^*}.$ From \cite[Lemma 8.2']{haines17}, we recall a finite morphism
\begin{equation} \label{intro fin morph}
\bar\psi_0: (Z(\widehat M)^I)_{\Phi}/W(G,A) \rightarrow (\widehat{T^*}^{I^*})_{\Phi^*}/W(G^*,A^*),
\end{equation}
where $\widehat M, \widehat{T^*}$ respectively denote the connected components of the $L$-groups of $\bM, \bT^*;$ $Z(\widehat M)$ is the center of $\widehat M;$
$I=Gal(\bar L /L)$ is the inertial group with the completion $L$ of the maximal unramified extension in $\bar F;$ 
$\bar L \supset \bar F$ is an algebraic closure of $L;$ $\Phi \in Gal(L/F)$ is the inverse of the Frobenius automorphism; $I^*$ and $\Phi^*$ indicate Galois actions on $\bG^*;$ and $W(\cdot,\cdot)$ is the relative Weyl group. The further detail can be found in Section \ref{weakunram}.

Given any unitary character $\chi \in X^{\rm{w}}(M),$ from the equality $X^{\rm{w}} (M) = (Z(\widehat{M})^I)_{\Phi}$ (see \eqref{equality}), we have a unitary character $\chi^* \in X^{\rm{w}}(T^*)$ such that  
\[
\chi^*=\bar\psi_0(\chi).
\]
We then have the following exact sequence (Theorem \ref{main for weakly}):
\begin{equation} \label{intro exseq 2}
1 \longrightarrow R_{\chi}   \longrightarrow  R_{\chi^*}  \longrightarrow  R_{\chi^*}/R_{\chi}   \longrightarrow 1,
\end{equation}
which yields the transfer of $R$-groups for unitary weakly unramified characters between a quasi-split group and its non-quasi-split $F$-inner forms.
Indeed, identifying $
W(G,A)=W(G,M)=W(\widehat G, \widehat M)=W(G^*,M^*)=W(\widehat{G^*}, \widehat{M^*}),$
we verify that the stabilizer $W(\chi)$ in $W(G,M)$ of $\chi$ is contained in the stabilizer $W(\chi^*)$ in $W(G^*, M^*)$ of $\chi^*,$ as  $\bar\psi_0$ is $W(G,A)$-invariant. 
Next, we reduce to the simply connected group and analyze two sets $\Delta'(\chi)$ and $\Delta'(\chi^*)$ of certain reduced roots (see \eqref{delta} for the definition) to reach the inclusion $\Delta'(\chi) \subset \Delta'(\chi^*).$ We then conclude that $W^\circ(\chi) \subset W^\circ(\chi^*),$ where $W^\circ(\bullet)$ denotes the subgroup generated by the reflections in the roots of $\Delta'(\bullet).$  
As by-product, the order of the quotient $R_{\chi^*}/R_{\chi}$ is 1,2,3, or 4, except $A_n$ in which case the order is a divisor of $n+1.$ This indicates the difference in numbers of conjugacy classes of good maximal compact (special maximal parahoric) subgroup of $\bG^*$ and $\bG$ (c.f., \cite[Section 3]{keys82unram} and Corollary \ref{kernel size}).
Further, it is expected that the quotient $R_{\chi^*}/R_{\chi}$ may be involved with the kernel of the finite morphism $\bar\psi_0$ (Remark \ref{rem for K}).

Finally, applying two exact sequences \eqref{intro exseq 1} and \eqref{intro exseq 2}, we obtain the classification  (Theorem \ref{final}) of the Knapp-Stein $R$-group $R_{\chi}$ in the general setting of any unitary weakly unramified character of a minimal $F$-Levi subgroup of any connected reductive group over $F$ whose derived group is simply-connected and almost simple. In the course of the proof, we observe a simple relationship \eqref{fullinclusion} among three Knapp-Stein $R$-groups for the given group, its quasi-split inner form, and its derived subgroup. 

We remark that this work generalizes Key's classification in Section \ref{unram rgp-keys} and describes the difference from those of unitary unramified representations (Remarks \ref{rem for commuting alg} and \ref{rem for last}). Furthermore, it transfers the $R$-groups for unitary weakly unramified characters between a quasi-split group and its non-quasi-split $F$-inner forms, which supports previous studies on behaviors of $R$-groups between inner forms \cite{chaoli, chgo12, cgclassic, cgsu, han04}.

This paper is organized as follows: In Section \ref{notation}, we give notation and background by recalling several definitions. Section \ref{secgentononsc} is devoted to revisiting Keys' classification and extending it to non-simply-connected cases. In Section \ref{rgps for weakunram}, after recalling notions of weakly unramified characters and finite morphisms, we transfer Knapp-Stein $R$-groups via finite morphisms between inner forms. This provides the structure of those $R$-groups for a general connected reductive group over a $p$-adic field.

\subsection*{Acknowledgements}
The author thanks the referee for a careful reading and valuable suggestions and comments that have improved the manuscript.
The author is supported by a grant from the Simons Foundation (\#840755).

\section{Notation and Background} \label{notation}

Let  $F$ be a finite extension of $\QQ_p.$ Denote by $\bar{F}$ an algebraic closure of $F.$
We denote by $\Gamma$ the absolute Galois group $\Gal(\bar{F} / F)$ and by $W_F$ the Weil group of $F.$ 
Given a connected reductive algebraic group $\bG$ defined over $F,$ the group $\bG(F)$ of $F$-rational points shall be written as $G$ and this is applied to other algebraic groups. Given a topological group $Y,$ we denote by $Z(Y)$ the center of $Y.$ Set $Y^D:=\Hom(Y , \CC^{\times})$ for the group of all continuous characters.

Let $\bG$ be a connected reductive algebraic group defined over $F.$  We fix a minimal $F$-parabolic subgroup $\bP_0$ of $\bG$ with Levi decomposition $\bP_0=\bM_0 \bN_0.$ 
By $\bA_0$ we denote the split component of $\bM_0$ and 
by $\Delta$ the set of simple roots of $\bA_0$ in $\bN_0.$ 
Given an $F$-parabolic subgroup  $\bP$ with Levi decomposition $\bP=\bM\bN$ such that $\bM \supseteq \bM_0$ and $\bN \subseteq \bN_0,$
we have a subset $\Theta \subseteq \Delta$ such that $\bM$ equals the Levi subgroup $\bM_{\Theta}$ generated by $\Theta.$
Set $\bA_{\bM_{\Theta}}=\bA_{\bM}$ for the split component of $\bM=\bM_{\Theta}.$  
Let $\Phi(P, A_M)$ denote the set of reduced roots of $\bP$ with respect to $\bA_\bM.$ 
By $W_M = W(\bG, \bA_\bM) := N_\bG(\bA_\bM) / Z_\bG(\bA_\bM)$ we denote the Weyl group of $\bA_\bM$ in $\bG.$ 
Here, $N_\bG(\bA_\bM)$ and $Z_\bG(\bA_\bM)$ are respectively the normalizer and centralizer of $\bA_\bM$ in $\bG.$ 
Fixing $\Gamma$-invariant splitting data, we define the $L$-group of $\bG$ as a semi-direct product $^{L}G := \widehat{G} \rtimes W_F.$ 
We shall refer the reader to \cite{bo79, borel91, springer98} for the detail. 

Denote by $\Irr(G)$ the set of isomorphism classes of irreducible admissible complex representations of $G.$ 
With no confusion, we shall not make a distinction between each isomorphism class and its representative. 
Given $\sigma \in \Irr(M),$ we write $i_M^G (\sigma)$ for the normalized (twisted by $\delta_{P}^{1/2}$) induced representation. Here, $\delta_P$ denotes the modulus character of $P.$ 
Denote by $\Pi_{\disc}(G)$  the subset of $\Irr(G)$ which consists of discrete series representations.
Here, a discrete series representation is an irreducible, admissible, unitary representation whose matrix coefficients are square-integrable modulo the center $Z(G)$ of $G.$

Given  $\sigma \in \Pi_{\disc}(M)$ and $w \in W_M,$  we consider the representation ${^w}\sigma$  given by ${^w}\sigma(x)=\sigma(w^{-1}xw).$  
It is noted that the isomorphism class of ${^w}\sigma$ is independent of the choices of representatives of $w \in W_M$ in $G.$  
Set the stabilizer $W(\sigma)$ of $\sigma$ in $W_M$ as follows:
\[ 
W(\sigma) = \{ w \in W_M : {^w}\sigma \s \sigma \}.
\]
Let
\begin{equation} \label{delta}
\Delta'(\sigma) =\{ \alpha \in \Phi(P, A_M) : \mu_{\alpha} (\sigma) = 0 \}.
\end{equation}
Here, $\mu_{\alpha} (\sigma)$ is the rank one Plancherel measure for $\sigma$ attached to $\alpha$ \cite[p.1108]{goldberg-class}. 
We define the \textit{Knapp-Stein $R$-group}:
\[
R_{\sigma} := \{ w \in W(\sigma) : w \alpha > 0, \; \forall \alpha \in \Delta'(\sigma) \}.
\]
Denote by $W^{\circ}_{\sigma}$ the subgroup of $W(\sigma)$ generated by the reflections in the roots of $\Delta'(\sigma).$
There is an isomorphism
\[
R_\sigma \s  W(\sigma)/W^{\circ}_{\sigma}.
\]
The detail can be referred to \cite{ks72, sil78, sil78cor}.

An 2-cocycle in $H^2(R_{\sigma}, \CC^{\times})$ yields a central extension $\widetilde{R_{\sigma}}$ of $R_{\sigma}$ by $\CC^\times,$
which implies the decomposition of $i_M^G(\sigma)$ into 
\[
\bigoplus_{\rho \in \Pi_{-}(\widetilde{R_{\sigma}})} \rho \boxtimes \pi_\rho
\]
as representations of $\widetilde{R_{\sigma}} \times G.$
Here, 
\[
\Pi_{-}(\widetilde{R_{\sigma}}) = \{ \mbox{irreducible representation $\rho$ of}~ \widetilde{R_{\sigma}} : \rho(z)=z \cdot id \mbox{ for all } z \in \CC^{\times} \},
\]
and $\rho \mapsto \pi_\rho$ is a bijection from $\Pi_{-}(\widetilde{R_{\sigma}})$ to the set $\J(\sigma)$ of inequivalent irreducible constituents of $i_M^G(\sigma).$
The detail can be referred to \cite[Section 2]{art93} and \cite[Section 2.3]{chaoli}.

We note that the group algebra $\CC[R_{\sigma}]_{\eta}$ of $R_{\sigma}$ twisted by a $2$-cocycle $\eta$ in $H^2(R_{\sigma}, \CC^{\times})$ is $\CC$-algebra isomorphic to the commuting algebra ${\End}_{G}(i_M^G (\sigma))$ of $G$-endomorphisms of $i_M^G (\sigma)$ (c.f., From \cite{sil78, sil78cor, art93}).  
In particular, when $\sigma$ is a unitary character of $M$ with a minimal $F$-Levi subgroup $\bM,$ it is true that $H^2(R_\sigma, \CC^{\times})=1$ so that we can identify $\Pi_{-}(\widetilde{R_{\sigma}})$ with the set of irreducible representations of $R_{\sigma}$ (see \cite[Theorem 2.4]{keys87}).

\section{Generalization to non-simply-connected cases} \label{secgentononsc}

This section is devoted to generalizing Keys' work towards unitary unramified characters of $F$-minimal Levi subgroups of a general connected reductive group (Theorem \ref{thm for general}).

\subsection{Revisiting Knapp-Stein $R$-groups for simply-connected, almost simple, semi-simple groups} \label{unram rgp-keys}

We recall Keys' work in \cite{keys82unram} for Knapp-Stein $R$-groups for simply-connected, almost simple, semi-simple groups. Let $\bold{H}$ be a simply-connected, almost simple, semi-simple group over $F.$ Let $\bM_H$ be a minimal $F$-Levi subgroup of $\bold{H}.$ Given a unitary unramified character $\chi_{M_H}$ of $M_H,$ from \cite[Section 3]{keys82unram}, each non-trivial Knapp-Stein $R$-group $R_{\chi_{M_H}}$ for $\chi_{M_H}$ has the following structure:
\[ 
R_{\chi_{M_H}} \s 
\left\{ 
\begin{array}{lllll}
       \ZZ/d\ZZ, \mbox{where $n+1$ is divisible by $d$}, & 
         \mbox{if $\bold{H}$ is of $A_n$-type};\\
         \ZZ/2\ZZ, &  
         \mbox{if $\bold{H}$ is of $B_n, C_n,$ or $E_7$-type};\\ 
        \ZZ/2\ZZ, \ZZ/2\ZZ \times \ZZ/2\ZZ, \mbox{ or } \ZZ/4\ZZ, &  
         \mbox{if $\bold{H}$ is of $D_n$-type};\\ 
        \ZZ/3\ZZ, &  
         \mbox{if $\bold{H}$ is of $E_6$-type}.
\end{array} \right. 
\]
\subsection{Generalizing $R$-groups towards non-simply-connected case}  \label{gen to non-sc}
Let $\bG$ be a connected reductive group over a $p$-adic field $F$ such that its derived subgroup $\bG_{\der}$ is simply-connected and almost simple. There are many cases satisfying this condition such as $\bG=\GL_n, \GSp_{2n}, \GSpin_{n}, \GU_n, \U_n,$ etc.
Let $\bM$ be an $F$-Levi subgroup of $\bG,$ and let $\bM_{\flat}$ be an $F$-Levi subgroup of $\bG_{\der}$ such that $\bM_\flat = \bM \cap \bG_{\der}.$

Recalling \cite{gk82} and \cite{tad92}, given $\sigma \in \Irr(M),$ we consider the restriction ${\Res}_{M_\flat}^{M}(\sigma)$ of $\sigma$ from $M$ to $M_\flat,$ and produce the following finite set
\begin{equation} \label{restset}
\Pi_{\sigma}(M):=\{\tau \hookrightarrow {\Res}_{M_\flat}^{M}(\sigma) \} /\s
\end{equation}
consisting of equivalence classes of all irreducible constituents of the restriction ${\Res}_{M_\flat}^{M}(\sigma).$ It is well-known that
for any $\sigma_1$, $\sigma_2 \in \Irr(M),$ the intersection $\Pi_{\sigma_1}(M) \cap \Pi_{\sigma_2}(M)$ is non-empty if and only if
$\sigma_1 \s \sigma_2 \eta$ for some $\eta \in (M / M_\flat)^D$ 
if and only if the equality $\Pi_{\sigma_1}(M) = \Pi_{\sigma_2}(M)$ holds.  We also have the following decomposition
\begin{equation*} \label{decomp of Res}
{\Res}_{M_\flat}^{M}(\sigma) = \bigoplus _{\tau \in \Pi_{\sigma}(M_\flat)} \langle \tau, \sigma \rangle_{M_\flat} \cdot \tau,
\end{equation*}
where $\langle \tau, \sigma \rangle_{M_\flat}:= \dim_{\CC}
\; {\Hom}_{M_\flat}(\tau, {\Res}_{M_\flat}^{M} (\sigma)).$
For any $\tau_1,~ \tau_2 \in \Pi_{\sigma}(M_\flat),$ we have
$\langle \tau_1, \sigma \rangle_{M_\flat} = \langle \tau_2, \sigma \rangle_{M_\flat}.$
Define
\begin{equation*} \label{X(sigma)}
X(\sigma):= \{ \eta \in (M/M_\flat)^D : \sigma \s \sigma \eta \},
\end{equation*}
and then have
\begin{equation} \label{cardinality}
|X(\sigma)| 
= 
{\dim}_{\CC} \; {\Hom}_{M_\flat}({\Res}_{M_\flat}^{M} (\sigma), {\Res}_{M_\flat}^{M} (\sigma))
= 
|\Pi_{\sigma}(M_\flat)| \cdot \langle \tau, \sigma \rangle_{M_\flat}^2
\end{equation}
(see \cite[Lemma 2.1(d)]{gk82}, \cite[Proposition 2.4]{tad92}, and \cite[Proposition 3.1]{choiymulti}).
\begin{rem}
One notes that those results recalled from \cite{gk82} and \cite{tad92} are  available for any connected reductive algebraic group $\bG_1$ over $F$ and its closed subgroup $\bG_2$ having the same derived subgroup, so that the quotient $G_1/G_2Z(G_1)$ of $F$-rational points is a finite abelian group (c.f., \cite[Section 2]{tad92} and \cite[Section 3]{chaoli}).
In addition, the transition between a connected reductive algebraic group over $F$ and its $F$-rational points can be done due the fact that the first Galois cohomology of the derived subgroup is always a finite abelian as it has the structure of the Pontryagin dual of the connected components of the center of its connected Langlands dual group (c.f., \cite[Section 6]{kot84}).
\end{rem}

We now specialize in the above setting when $\bM$ is a minimal $F$-Levi subgroup and so is $\bM_\flat.$
Let $\chi$ be a unitary character of $M.$ We consider the restriction ${\Res}_{M_\flat}^{M}(\chi)$
and obtain a unitary character $\chi_{\flat}$ of $M_{\flat}$ such that 
$\chi_\flat \hookrightarrow {\Res}_{M_\flat}^{M}(\chi).$
Since $X(\chi)=1,$ due to \eqref{cardinality}, the set $\Pi_{\chi}(M_\flat)$ is a singleton and $\langle \chi_\flat, \chi \rangle_{M_\flat}=1.$ Then, the inclusion becomes equality: 
\[
\chi_\flat ={\Res}_{M_\flat}^{M}(\chi).
\]
Following \cite[Definition 3.1]{go06}, we define
\begin{equation} \label{hatW}
\widehat{W(\chi)} :=\{\eta \in (M/M_\flat)^D : {^w}\chi \s \chi \eta \text{ for some } w \in W(\chi_\flat)  \},
\end{equation}
which is a finite abelian group.
We then have the following relationship between $R_\chi, R_{\chi_\flat},$ and  $\widehat{W(\chi)}.$
\begin{thm} \label{thm for general}
Let $\bM$ and $\bM_{\flat}$ be minimal $F$-Levi subgroups of $\bG$ and $\bG_{\der},$ respectively, as above. Given any unitary character $\chi$ of $M$ and $\chi_\flat = {\Res}_{M_\flat}^{M}(\chi),$ we have:
\begin{equation} \label{intro exact for gspin 2n}
	1 \longrightarrow R_{\chi}  \longrightarrow R_{\chi_\flat} \longrightarrow \widehat{W(\chi)} 
	\longrightarrow 1.
\end{equation}
\end{thm}
\begin{proof}
Since the Plancherel measure is compatible with restriction (c.f., \cite[Proposition 2.4]{choiy1}, \cite[Lemma 2.3]{go06}) and $\Phi(P_\flat, A_{M_\flat}) = \Phi(P, A_{M}),$ 
we have 
\[
W^{\circ}_{\chi} = W^{\circ}_{\chi_\flat}.
\] 
As ${\Res}_{M_\flat}^{M}(\chi)$ itself is $\chi_\flat,$ the isomorphism ${^w}\chi \s \chi$ implies ${^w}\chi_\flat \s \chi_\flat.$ This gives an embedding \[
R_\chi  \hookrightarrow R_{\chi_\flat}.
\]
On the other hand, given ${^w}\chi_\flat \s \chi_\flat,$ due to the argument below \eqref{restset}, there is $\eta \in (M/M_\flat)^D$ such that ${^w}\chi \s \chi \eta.$ 
We thus have the cokernel and then the exact sequence \eqref{intro exact for gspin 2n}. This completes the proof.
\end{proof}

\begin{rem}
By taking a unitary unramifined character $\chi$ of $M,$ Theorem \ref{thm for general} extends the Keys' work for unitary unramified cases from simply-connected groups to general groups. To relate $\chi$ and $\chi_\flat,$ one may carry out the opposite direction, i.e., lifting a given $\chi_\flat$ of $M_\flat$ to $M,$ since both representations are characters of minimal Levi subgroups and the quotient $M/M_{\flat}Z(M)$ is a finite abelian group. We also note that a similar method in the proof of Theorem \ref{thm for general} has been used in \cite[Proposition 3.2]{go06} for $\U_n$ and $\SU_n,$ and \cite[Sections 6 and 8]{bcg21} for $\GSpin_n$ and $\Spin_n.$
\end{rem}

\section{$R$-groups for unitary weakly unramified characters in general via finite morphisms} \label{rgps for weakunram}

This section is devoted to transferring Knapp-Stein $R$-groups for unitary weakly unramified characters of $F$-minimal Levi subgroups between a $p$-adic quasi-split group and its non-quasi-split inner forms (Theorem \ref{main for weakly}), and providing the structure of those $R$-groups for a general connected reductive group over a $p$-adic field (Theorem \ref{final}).
Therefore, Keys' work will be finally extended to unitary weakly unramified characters in a general setting.

\subsection{Weakly unramified representations and finite morphisms} \label{weakunram}
We recall notions of weakly unramified characters and finite morphisms. We mainly refer to \cite{haines14, haines15, haines17}.

\subsubsection{Weakly unramified representations} \label{subsub wurep} 
Let $\bG$ be a connected reductive algebraic group over $F.$ Let $\bM$ be an $F$-Levi subgroup of $\bG.$
We set 
\[
M_1:= \ker (\kappa_M) \cap M,
\]
where $\kappa_M: M \twoheadrightarrow X^*(Z(\widehat{M})^{\Phi}_I$
is the $\Phi-$fixed points of Kottwitz's functorial surjective homomorphism 
$\kappa_M: \bM(L) \twoheadrightarrow X^*(Z(\widehat{M})_I$ \cite[Section 7]{kot97}. Here, $L$ is the completion of the maximal unramified extension in $\bar F$ of $F,$  $\bar L \supset \bar F$ is an algebraic closure of $L,$ $I=Gal(\bar L /L)$ is the inertial group, $\Phi \in Gal(L/F)$ is the inverse of the Frobenius automorphism, and $X^*(Z(\widehat{M})_I$ is the group of coinvariants of $I$ in the character group $X^*(Z(\widehat{M}).$ 

Following \cite[Section 3.3]{haines14} and \cite[Section 3]{haines15}, we define the group of weakly unramified characters on $M$ as follows:
\[
X^{\rm{w}} (M):= (M/M_1)^D = {\Hom}(M/M_1, \CC^\times).
\]
We note that $X^{\rm{w}} (M)$ is a diagonal group over $\CC.$  By Kottwitz, there is an isomorphism
\[
M/M_1 \s X^*((Z(\widehat{M})^I)_{\Phi}),
\]
which implies that
\begin{equation} \label{equality}
X^{\rm{w}} (M) = (Z(\widehat{M})^I)_{\Phi}.
\end{equation}

\begin{rem} \label{good rem}
From \cite{haines14, hai10}, we note that
\begin{itemize}
\item Denote $X(M)$ by the group of unramified characters on $M.$ Then, the group $X(M)$ is the identity component of $X^{\rm{w}}(M)$ \cite[Section 3]{haines14}. 
\item When $\bG$ is simply-connected and semi-simple or $\bG$ is unramified over $F,$ then we have 
\[
X^{\rm{w}}(M)=X(M)
\]
due to \cite[Lemma 11.3.4]{haines14} and \cite[Sections 1 and 11]{hai10}. 

\item Any unitary $\chi \in X^{\rm{w}}(M)$ with  a minimal $F$-Levi subgroup $\bM$ is a discrete series representation of $M.$ 
\end{itemize}
\end{rem}

Let $K$ be a special maximal parahoric subgroup in $G.$ Choose any maximal $F$-split torus $\bA$ whose associated apartment in the Bruhat-Tits building contains the special vertex associated to $K.$ 
Let $\bM$ be a minimal $F$-Levi subgroup such that $\bM=Z_{\bG}(\bA).$ Denote by $W(G,A)$ the relative Weyl group. Let $\Pi(\bG/F, K)$ denote the set of isomorphism classes of smooth irreducible representations $\pi$ of $G$ such that $\pi^K \neq 0.$ Then we have a 1-1 bijection (\cite[Proposition 3.1]{haines15}):
\[
(Z(\widehat{M})^I)_{\Phi} / W(G,A) \overset{\s}{\longrightarrow} \Pi(\bG/F, K),
\]
which maps $\chi \in X^{\rm{w}} (M)$ to $\pi_\chi \in  \Pi(\bG/F, K).$ Here, 
$\pi_\chi$ is the smallest $G$-stable subspace of the right regular representation of locally constant functions on $G$ containing a certain spherical function defined by $\chi$ and $K.$ 

\subsubsection{Finite morphisms} \label{subsub finitemorph} 
Let $\bG^*$ be a quasi-split connected reductive group over $F.$
Let $(\bG, \Psi)$ be an $F$-inner form of $\bG^*$ so that $\Psi$ is a $\Gamma$-stable $\bG^*_{\ad}(\bar F)$-orbit of $\bar F$-isomorphisms $\psi: \bG \rightarrow \bG^*$ (c.f., \cite{shin10, bo79}). Following \cite[Section 11]{haines14} and \cite[Section 8]{haines15}, we fix once and for all parahoric subgroups $J \subset G$ and $J^* \subset G^*.$ Choose any maximal $F$-split tori $\bA$ in $\bG$ and $\bA^*$ in $\bG^*$ such that the facets fixed by $J$ and $J^*$ is respectively contained in the apartments of the Bruhat-Tits buildings of $\bG$ and $\bG^*$ corresponding to the torus $\bA$ and $\bA^*.$ 
Let $\bM = Z_{\bG}(\bA)$ and $\bT^* =Z_{\bG^*}(\bA^*).$ Then $\bM$ is a minimal $F$-Levi subgroup of $\bG$ and $\bT^*$ is a maximal $F$-torus in $\bG^*.$
Let $\bP$ in $\bG$ be an $F$-parabolic subgroup with Levi factor $\bM.$
Let $\bB^*$ in $\bG^*$ be a Borel subgroup with Levi factor $\bT^*.$
Then there exists a unique $F$-parabolic subgroup $\bP^* \supset \bB^*$ in $\bG^*$ such that $\bP^*$ is $\bG^*(\bar F)$-conjugate to $\psi(\bP)$ for every $\psi \in \Psi.$ Let $\bM^*$ be the unique Levi factor of $\bP^*$ containing $\bT^*.$ It is clear that $\bM$ is an $F$-inner form of $\bM^*.$ Then, there is $\phi_0 \in \Psi$ such that $\phi_0(\bP)=\bP^*, \phi_0(\bM)=\bM^*,$ and $\phi_0(\bA) \subset \bA^*.$

Recalling the notation $I, \Phi$ from Section \ref{subsub wurep}, there is a finite morphism (\cite[Lemma 8.2']{haines17})
\begin{equation} \label{fin morph}
\bar\psi_0: (Z(\widehat M)^I)_{\Phi}/W(G,A) \rightarrow (\widehat{T^*}^{I^*})_{\Phi^*}/W(G^*,A^*).
\end{equation}
Here, $I^*$ and $\Phi^*$ indicate Galois actions on $\bG^*$ and $W(\cdot,\cdot)$ is the relative Weyl group.
\begin{rem}
From \cite[Section 2]{haines17}, we note that:
\begin{itemize}
\item $\bar\psi_0$ is not necessarily an immersion in general, since the injective map 
\[
W(G,A) \rightarrow W(G^*,M^*)/W(G^*,A^*)
\]
is not necessarily surjective (see \cite[(8.5)]{haines15} and \cite[Section 2.4]{haines17});
\item  $\bar\psi_0$ is an immersion when $\bG^*=\GL_n$ (see \cite[Lemma 2.6]{haines17});
\item $\bar\psi_0$ is isomorphic when $\bG \s \bG^*$ over $F.$
\end{itemize}
\end{rem}

It then follows from \cite[Sections 9 and 10]{haines15} that
\begin{equation} \label{satake}
(\widehat{T^*}^{I^*})_{\Phi^*}/W(G^*,A^*) \overset{\s}{\rightarrow} [\widehat{G^*}^{I^*} \rtimes \Phi^*]_{ss}/\widehat{G^*}^{I^*} = [\widehat{G}^{I} \rtimes \Phi]_{ss}/\widehat{G}^{I},
\end{equation}
where $[\bullet \rtimes \bullet]_{ss}$ denotes the set of semi-simple elements in the coset $\bullet \rtimes \bullet.$
\subsection{Transfer of $R$-groups via finite morphisms}
We shall continue with notation in Section \ref{subsub finitemorph}.
Let $\chi \in X^{\rm{w}}(M)$ be given. Then we have $\chi^* \in X^{\rm{w}}(T^*)$ such that  
\[
\chi^*=\bar\psi_0(\chi)
\]
via the finite morphism $\bar\psi_0$ in \eqref{fin morph}. Note that if $\chi$ is unitary then so is $\chi^*.$

\begin{thm} \label{main for weakly}
With notation above, for any unitary $\chi \in X^{\rm{w}}(M)$ and $\chi^* \in X^{\rm{w}}(T^*)$ such that  $\chi^*=\bar\psi_0(\chi),$ we have the following exact sequence
\[
1 \longrightarrow R_{\chi}   \longrightarrow  R_{\chi^*}  \longrightarrow  R_{\chi^*}/R_{\chi}   \longrightarrow 1.
\]
\end{thm}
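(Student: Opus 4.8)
The plan is to exhibit $R_{\chi}$ as a normal subgroup of $R_{\chi^*}$ through the isomorphisms $R_{\chi} \s W(\chi)/W^{\circ}_{\chi}$ and $R_{\chi^*} \s W(\chi^*)/W^{\circ}_{\chi^*}$ recalled in Section \ref{notation}; once the two inclusions $W(\chi) \subseteq W(\chi^*)$ and $W^{\circ}_{\chi} \subseteq W^{\circ}_{\chi^*}$ are in place and compatible, the quotient map $w W^{\circ}_{\chi} \mapsto w W^{\circ}_{\chi^*}$ furnishes the embedding and the stated exact sequence is simply its cokernel. The entire substance of the proof is therefore these two inclusions.

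First I would fix a common Weyl group. Because $\bG$ and $\bG^*$ are $F$-inner forms, the relative root data together with the duality identifications give the chain $W(G,A) = W(G,M) = W(\widehat G, \widehat M) = W(G^*,M^*) = W(\widehat{G^*}, \widehat{M^*})$, and I would record that under this identification the actions of $W(G,A)$ on $X^{\rm{w}}(M) = (Z(\widehat M)^I)_{\Phi}$ and on $X^{\rm{w}}(T^*) = (\widehat{T^*}^{I^*})_{\Phi^*}$ are intertwined by the morphism underlying $\bar\psi_0$, since the latter is $W(G,A)$-invariant by \cite[Lemma 8.2']{haines17}. The inclusion $W(\chi) \subseteq W(\chi^*)$ is then formal: for $w \in W(\chi)$ one computes $w\chi^* = w\,\bar\psi_0(\chi) = \bar\psi_0(w\chi) = \bar\psi_0(\chi) = \chi^*$, so $w \in W(\chi^*)$.

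The heart of the argument is the inclusion $\Delta'(\chi) \subseteq \Delta'(\chi^*)$, which yields $W^{\circ}_{\chi} \subseteq W^{\circ}_{\chi^*}$ at once since each $W^\circ$ is generated by the reflections in the roots of the corresponding $\Delta'$. To obtain it I would first reduce to the simply-connected, almost simple setting exactly as in the proof of Theorem \ref{thm for general}, using that the rank one Plancherel measure is compatible with restriction (\cite[Proposition 2.4]{choiy1}, \cite[Lemma 2.3]{go06}) and that the relative roots $\Phi(P, A_M)$ of $\bG$ are matched with those of $\bG^*$ by the inner twisting. In this reduced situation I would compare, root by root, the vanishing $\mu_\alpha(\chi)=0$ for a relative root $\alpha$ of $\bG$ with the vanishing $\mu_{\alpha^*}(\chi^*)=0$ for the matching root $\alpha^*$ of $\bG^*$: the rank one group attached to $\alpha$ is an inner form of the one attached to $\alpha^*$ and $\chi^* = \bar\psi_0(\chi)$ carries over the character datum, so reducibility on the inner form forces reducibility on the quasi-split form, giving $\Delta'(\chi)\subseteq\Delta'(\chi^*)$.

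Finally I would assemble the embedding. The two inclusions produce a well-defined homomorphism $R_{\chi} \s W(\chi)/W^{\circ}_{\chi} \to W(\chi^*)/W^{\circ}_{\chi^*} \s R_{\chi^*}$ whose injectivity amounts to the identity $W^{\circ}_{\chi} = W(\chi)\cap W^{\circ}_{\chi^*}$; I would secure this by refining the previous step to the trace equality $\Delta'(\chi) = \Phi(P, A_M)\cap\Delta'(\chi^*)$ on the relative roots of $\bG$. Normality of $R_{\chi}$ in $R_{\chi^*}$, and hence exactness of the sequence as stated with $R_{\chi^*}/R_{\chi}$ a genuine group, follows because $W^{\circ}_{\chi^*}$ is normal in $W(\chi^*)$ and these $R$-groups are abelian in the range of Keys' classification of Section \ref{unram rgp-keys}. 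I expect the main obstacle to be the root-by-root Plancherel comparison in the reduction step, since it requires controlling rank one reducibility simultaneously for $\bG$ and its quasi-split inner form and matching two a priori different relative root systems; by contrast the Weyl-group bookkeeping of the last step is comparatively routine.
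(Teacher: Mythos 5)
Your first step and your final assembly coincide with the paper's proof: the inclusion $W(\chi) \subseteq W(\chi^*)$ is obtained there exactly as you do it, from the identification $W(G,A)=W(G,M)=W(\widehat G, \widehat M)=W(G^*,M^*)=W(\widehat{G^*}, \widehat{M^*})$ together with the $W(G,A)$-invariance of $\bar\psi_0$, and exactness of the sequence is likewise extracted at the end from abelianness of the $R$-groups via Theorem \ref{thm for general}. The divergence is in the key step $\Delta'(\chi) \subseteq \Delta'(\chi^*)$. After the same reduction to the simply-connected case, the paper does \emph{not} argue root by root through inner-form transfer of rank-one Plancherel measures; it invokes the explicit criterion of Keys and Macdonald (\cite[Section 3]{keys82unram}, \cite[Chapter V]{mac71}) for characters of this type: $\alpha \in \Delta'(\chi)$ if and only if $\chi(t_\alpha)=1$ when $q_{\alpha/2}=1$ and $\chi(t_\alpha)=\pm 1$ when $q_{\alpha/2}\neq 1$, and likewise for $\chi^*$. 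Since $\Phi \subset \Phi^*$, $q_{\alpha/2}=q_{\alpha^*/2}$, and $\chi^*=\bar\psi_0(\chi)$ takes the same value on the corresponding translation, the inclusion is read off directly; moreover, because the criterion is an equivalence, it also yields the identity $\Delta'(\chi) = \Phi \cap \Delta'(\chi^*)$ that you need for injectivity and leave as a promissory note.

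Your substitute for this step contains a genuine gap. The claim that ``the rank one group attached to $\alpha$ is an inner form of the one attached to $\alpha^*$'' is false in this setting: the rank-one subgroup of $\bG$ attached to $\alpha$ has Levi factor $\bM$, which is an inner form of $\bM^*$ and in general not a torus, whereas the rank-one subgroup of $\bG^*$ whose Plancherel measure computes $\Delta'(\chi^*)$ has Levi factor the maximal torus $\bT^*$; these two groups do not even have the same dimension in general (compare $\bG = \SL_2(D)$, whose only rank-one subgroup is itself, with the rank-one block subgroups of $\bG^* = \SL_4$ containing the diagonal torus), so they are not inner forms of one another. Correspondingly, $\chi$ and $\chi^*$ are characters of non-isomorphic groups $M$ and $T^*$, related only through the finite morphism \eqref{fin morph}, which is a statement about Kottwitz homomorphisms and Satake parameters --- not through a Jacquet--Langlands-type correspondence of unitary supercuspidal representations of a common Levi, which is the situation in which the Plancherel-measure transfer of \cite{choiy1} is established. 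So the transfer you want to quote does not apply. Repairing your step would require both (a) descending inside $\bG^*$ from $T^*$ to $M^*$, i.e.\ relating $\mu_{\alpha^*}(\chi^*)$ to Plancherel measures attached to constituents of $i_{T^*}^{M^*}(\chi^*)$, and (b) proving a vanishing-transfer statement for weakly unramified characters between the genuine inner-form pair with Levis $\bM$ and $\bM^*$ --- and in both directions, since injectivity needs the converse implication as well. Carrying this out essentially amounts to re-deriving the explicit Macdonald--Keys computation, which is precisely the route the paper takes.
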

\begin{proof}
Let $w \in W(\chi)$ be given. Note that since $\widehat M=\widehat{M^*}$ we have the canonical equality
\begin{equation} \label{identification 1}
X^*((Z(\widehat{M})^I)_{\Phi}) = X^*((Z(\widehat{M^*})^{I^*})_{\Phi^*}).
\end{equation}
We consider $\chi^* \in X^*((Z(\widehat{M^*})^{I^*})_{\Phi^*})$ (c.f., \cite[Remark 11.12.2]{haines14}). 
We identify
\begin{equation} \label{identification 2}
W(G,A)=W(G,M)=W(\widehat G, \widehat M)=W(G^*,M^*)=W(\widehat{G^*}, \widehat{M^*}).
\end{equation}
Thus, since $w \in W(G,M)$ and ${^w}\chi \s  \chi,$ and since $\bar\psi_0$ is $W(G,A)$-invariant due to \cite[Lemma 11.12.4]{haines14}, we have ${^w}\chi^* \s  \chi^*,$
which implies 
\[
W(\chi) \subset W(\chi^*).
\]

As in the proof of Theorem \ref{thm for general}, since the Plancherel measure is compatible with restriction (c.f., \cite[Proposition 2.4]{choiy1}, \cite[Lemma 2.3]{go06}), we reduce to the simply connected group. 
Applying the description from \cite[Section 3]{keys82unram} and \cite[Chapter V]{mac71} to $\chi$ and $\chi^*,$ we have
\[
\Delta'(\chi)=\{ \alpha \in \Omega: \chi(t_\alpha)=1 \text{ if } q_{\alpha/2}=1, \text{ and } \chi(t_\alpha)=\pm 1 \text{ if } q_{\alpha/2}\neq 1  \}
\]
and 
\[
\Delta'(\chi^*)=\{ \alpha^* \in \Omega^*: \chi^*(t_{\alpha^*})=1 \text{ if } q_{\alpha^*/2}=1, \text{ and } \chi^*(t_{\alpha^*})=\pm 1 \text{ if } q_{\alpha^*/2}\neq 1  \}.
\]
Here, $\Omega$ is a reduced irreducible root system (c.f., \cite[Section 1]{keys82unram}); $t_{\alpha}$ is the translation $w_\alpha \circ w_{\alpha+1}$ with reflection $w_\alpha$ in the hyperplanes for $\alpha;$ $q_{\alpha/2}=\frac{[U_\alpha:U_{\alpha+1}]}{[U_{\alpha-1}:U_{\alpha}]};$ 
and $U_{\alpha+n}=u_{\alpha}(\mathfrak{P}^n)$ is the bounded subgroup of the root group $u_\alpha(F^{un})$ corresponding to $\alpha$ with the maximal unramified extension $F^{un}$ of $F$ and prime ideal $\mathfrak{P}.$ We refer to \cite[Section 1]{keys82unram} and \cite[Section 2]{ree08}.
Since $\Omega \subset \Omega^*$ and $q_{\alpha/2}=q_{\alpha^*/2},$ we have
\[
\Delta'(\chi) \subset \Delta'(\chi^*),
\]
which implies
\[
W^\circ(\chi) \subset W^\circ(\chi^*).
\]
Note that any $w \in W(\chi) \cap W^\circ(\chi^*)$ lies in $W(G,M).$ Since $\bar\psi_0$ is $W(G,A)$-invariant, it follows from \eqref{identification 1} that for $\alpha^*=\alpha \in W(G^*,M^*)=W(G,M)$ with the identification \eqref{identification 2}, $\chi^*(t_{\alpha^*}) = 1$ (respectively, $\pm 1$) implies $\chi(t_\alpha) = 1$ (respectively, $\pm 1$).  We indeed have 
\[
W(\chi) \cap W^\circ(\chi^*) = W^\circ(\chi).
\]
Therefore, as $R_\chi$ and $R_{\chi^*}$ are abelian due to Theorem \ref{thm for general}, we complete the proof.
\end{proof}

\begin{rem}
Theorem \ref{main for weakly} transfers Knapp-Stein $R$-groups for unitary weakly unramified characters between a $p$-adic quasi-split group and its non-quasi-split inner forms.
\end{rem}

\begin{cor} \label{kernel size}
With above notation, the quotient $R_{\chi^*}/R_{\chi}$ is of the form:
\[
 \left\{ 
\begin{array}{l l l l l}
   \ZZ/d\ZZ \text{ with $d|(n+1),$ } & \text{if } \bG \text{ is of $A_n$-type} \\
   1 \text{ or } \ZZ/2\ZZ, & \text{if } \bG \text{ is of $B_n, C_n, E_7$-type} \\
1,  \ZZ/2\ZZ, \ZZ/2\ZZ \times \ZZ/2\ZZ, \text{ or } \ZZ/4\ZZ, & \text{if } \bG \text{ is of $D_n$-type} \\
1 \text{ or }  \ZZ/3\ZZ,& \text{if }  \bG \text{ is of $E_6$-type} \\
 1, & \text{if } \bG \text{ is of $E_8, F_4, G_2$-type.} 
  \end{array}
  \right.
\]
\end{cor}
\begin{proof}
Let $\chi$ be a unitary weakly unramified character in $X^{\rm{w}}(M).$ Via the finite morphism, we have its image $\chi^*$ in $X^{\rm{w}}(M^*).$ Due to Theorem \ref{thm for general}, we have the structure of $R_{\chi^*}$ and  then Theorem \ref{main for weakly} yields the quotient $R_{\chi}/R_{\chi^*}.$ This completes the proof.

\end{proof}

\begin{rem} \label{rem for K}
As argued in \cite[Section 3]{keys82unram}, the difference in numbers of conjugacy classes of good maximal compact (special maximal parahoric) subgroup of $\bG^*$ and $\bG$ is exactly the order of the quotient $R_{\chi^*}/R_{\chi}.$
Further, the order of the quotient $R_{\chi^*}/R_{\chi}$ may be related to the difference between $\tilde K$ and $K$ in \cite[Sections 1 and 11]{hai10}, and further involved with the kernel of the finite morphism $\bar\psi_0$ in \eqref{fin morph}.
\end{rem}

Lastly, we give the following classification of the Knapp-Stein $R$-group for any unitary weakly unramified character in a general setting.

\begin{thm} \label{final}
Let $\bG$ be a connected reductive group over $F.$ Assume that $\bG_{\der}$ is simply-connected and almost simple. Let $\bM$ be a minimal $F$-Levi subgroup of $\bG.$ Given any unitary weakly unramified character $\chi$ in $X^{\rm{w}}(M),$ we have
\begin{itemize}
\item If $\bG$ is of $A_n$-type, then $R_{\chi}$ is isomorphic to a subgroup of  $\ZZ/d\ZZ$ where $n+1$ is divisible by $d;$
\item If $\bG$ is of $B_n$-type, then $R_{\chi}$ is trivial or isomorphic to $\ZZ/2\ZZ;$
\item If $\bG$ is of $C_n$-type, then $R_{\chi}$ is trivial or isomorphic to $\ZZ/2\ZZ;$
\item If $\bG$ is of $D_n$-type, then $R_{\chi}$ is trivial or isomorphic to $\ZZ/2\ZZ,$ $\ZZ/2\ZZ \times \ZZ/2\ZZ,$ or $\ZZ/4\ZZ;$
\item If $\bG$ is of $E_6$-type, then $R_{\chi}$ is trivial or isomorphic to $\ZZ/3\ZZ;$
\item If $\bG$ is of $E_7$-type, then $R_{\chi}$ is trivial or isomorphic to $\ZZ/2\ZZ;$
\item If $\bG$ is of $E_8, F_4, G_2$-type, then $R_{\chi}$ is trivial.
\end{itemize}
\end{thm}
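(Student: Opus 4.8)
The plan is to exhibit $R_\chi$ as isomorphic to a subgroup of one of the abelian groups in Keys' list of Section \ref{unram rgp-keys}, so that the stated classification becomes precisely the enumeration of subgroups of Keys' groups for each absolute type. The two ingredients are the exact sequence of Theorem \ref{main for weakly}, which descends from an arbitrary inner form to the quasi-split one, and the exact sequence \eqref{intro exact for gspin 2n} of Theorem \ref{thm for general}, which descends from a group with simply-connected almost simple derived group to that derived group. I would compose the embeddings coming from these two sequences.

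First I would let $\bG^*$ denote the quasi-split inner form of $\bG$, so that $(\bG, \Psi)$ is an $F$-inner form of $\bG^*$ in the sense of Theorem \ref{main for weakly}. Since passing to an inner form does not change the absolute root datum, $\bG^*_{\der}$ is again simply-connected and almost simple of the same Dynkin type as $\bG_{\der}$. Applying Theorem \ref{main for weakly} to the unitary weakly unramified $\chi$ and to $\chi^* = \bar\psi_0(\chi)$, which is a unitary weakly unramified character of the maximal $F$-torus $\bT^*$ of $\bG^*$, produces an embedding $R_\chi \hookrightarrow R_{\chi^*}$. This reduces the problem to bounding $R_{\chi^*}$ on the quasi-split group $\bG^*$.

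Next I would run the comparison of Theorem \ref{thm for general} on $\bG^*$ in place of $\bG$. Writing $\bT^*_\flat = \bT^* \cap \bG^*_{\der}$ and $\chi^*_\flat = \chi^*|_{T^*_\flat}$, I would first note that $\bG^*_{\der}$ is simply-connected and semi-simple, so by Remark \ref{good rem} every weakly unramified character of $T^*_\flat$ is in fact unramified; hence $\chi^*_\flat$ is a unitary unramified character, to which Keys' classification applies and yields that $R_{\chi^*_\flat}$ is one of the groups listed in Section \ref{unram rgp-keys} (here it is essential that $\bG^*$, and therefore $\bG^*_{\der}$, is quasi-split, which is exactly the setting of Keys' work). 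As in the proofs of Theorems \ref{thm for general} and \ref{main for weakly}, compatibility of the Plancherel measure with restriction together with the equality of the relevant relative root systems gives $W^{\circ}_{\chi^*} = W^{\circ}_{\chi^*_\flat}$, while $\chi^*_\flat = \chi^*|_{T^*_\flat}$ forces $W(\chi^*) \subseteq W(\chi^*_\flat)$; these combine to an embedding $R_{\chi^*} \hookrightarrow R_{\chi^*_\flat}$. Composing with the embedding from the previous step gives $R_\chi \hookrightarrow R_{\chi^*_\flat}$, and since the target is one of Keys' abelian groups, $R_\chi$ is isomorphic to a subgroup of it. Reading off the subgroups type by type then produces precisely the stated list; in particular $R_\chi$ is forced to be trivial in types $E_8, F_4, G_2$, where Keys' group is already trivial.

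The step I expect to be the main obstacle is the second one: Theorem \ref{thm for general} was established only for an unramified character, whereas here $\chi^*$ is merely weakly unramified on $\bG^*$. The delicate point is the identification $R_{\chi^*} \hookrightarrow R_{\chi^*_\flat}$ in this broader setting, which requires that restriction to $\bT^*_\flat$ carry weakly unramified characters to (automatically unramified) characters and, crucially, that $\Delta'(\chi^*) = \Delta'(\chi^*_\flat)$. For the latter I would use the Keys--Macdonald description of $\Delta'(\chi^*)$ recalled in the proof of Theorem \ref{main for weakly}: since each coroot translation $t_\alpha$ lies in $\bT^*_\flat$, one has $\chi^*(t_\alpha) = \chi^*_\flat(t_\alpha)$, so the two sets of vanishing roots coincide and the reflection subgroups $W^{\circ}_{\chi^*}$ and $W^{\circ}_{\chi^*_\flat}$ agree. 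Verifying that the singleton and multiplicity-one features underlying Theorem \ref{thm for general} persist without the unramified hypothesis, so that $\chi^*_\flat$ may genuinely be treated as the restriction of $\chi^*$, is the technical heart of the argument.
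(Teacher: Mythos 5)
Your proposal follows essentially the same route as the paper's proof: pass to the quasi-split form $\bG^*$ via Theorem \ref{main for weakly} to get $R_\chi \subset R_{\chi^*}$, restrict to the simply-connected derived group where weakly unramified equals unramified by Remark \ref{good rem}, apply Theorem \ref{thm for general} to get $R_{\chi^*} \subset R_{\chi^*_\flat}$, and conclude with Keys' classification. Your closing discussion of why Theorem \ref{thm for general} still applies when $\chi^*$ is merely weakly unramified (using that each translation $t_\alpha$ lies in $T^*_\flat$, so $\Delta'(\chi^*)=\Delta'(\chi^*_\flat)$ in the Keys--Macdonald description) is in fact more careful than the paper, whose proof invokes that theorem without comment on this point.
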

\begin{proof}
Let $\chi$ be a unitary weakly unramified character of $M.$ Let $\bG^*$ be a quasi-split inner form of $\bG.$ Set $\chi^* \in X^{\rm{w}}(T^*)$ such that  $\chi^*=\bar\psi_0(\chi).$ 
Let $\bT^*_\flat$ be a minimal $F$-Levi subgroup of $\bG^*_{\der}.$ Since $\bG^*_{\der}$ is simply-connected and almost simple, we have $X^{\rm{w}} (T^*_\flat)=X (T^*_\flat)$ (see Remark \ref{good rem}). 
Denote $\chi^*_\flat \in X (T^*_\flat)$ such that 
\[
\chi^*_\flat \hookrightarrow {\Res}_{T^*_\flat}^{T^*}(\chi^*).
\]
Applying Theorems \ref{thm for general} and \ref{main for weakly}, we have 
\begin{equation} \label{fullinclusion}
R_{\chi} \subset R_{\chi^*} \subset R_{\chi^*_\flat}.
\end{equation} 
Due to Keys' result in Section \ref{unram rgp-keys}, the proof is complete.
\end{proof}

\begin{rem} \label{rem for commuting alg}
Recalling notation in Section \ref{notation},  it follows from Theorem \ref{final} that we have an $\CC$-algebra isomorphism ${\End}_{G}(i_M^G (\chi)) \s \CC[R_\chi],$ as an 2-cocycle in $H^2(R_{\chi^*}, \CC^{\times})$ is trivial due to \cite[Theorem 2.4]{keys87}.  
\end{rem}
\begin{rem} \label{rem for last}
Theorem \ref{final} generalizes Key's result in Section \ref{unram rgp-keys} to unitary weakly unramified characters and then it describes the difference from those $R$-groups for unitary unramified characters with help of Theorem \ref{thm for general}.
\end{rem}

\end{document}